\theoremstyle{plain}
\newtheorem{thm}{Theorem}[section]
\newtheorem*{thm*}{Theorem}
\newtheorem{prop}{Proposition}[section]
\newtheorem*{prop*}{Proposition}
\newtheorem{cor}{Corollary}[section]
\newtheorem*{cor*}{Corollary}
\newtheorem*{lem*}{Lemma}
\theoremstyle{definition}
\newtheorem*{defn*}{Definition}
\newtheorem*{exmp*}{Example}
\newtheorem*{exmps*}{Examples}
\newtheorem{rem}{Remark}[section]
\newtheorem*{rem*}{Remark}
\newtheorem{rems}{Remarks}[section]
\newtheorem*{rems*}{Remarks}
\newtheorem*{note*}{Note}
\newcommand{\N}{{\mathbb N}}
\newcommand{\Z}{{\mathbb Z}}
\newcommand{\R}{{\mathbb R}}
\newcommand{\C}{{\mathbb C}}
\newcommand{\F}{{\mathbb F}}
\newcommand{\emps}{\emptyset}
\newcommand\restr[2]{\ensuremath{#1\big|{#2}}}
\newcommand{\la}{\langle}
\newcommand{\ra}{\rangle}
\numberwithin{equation}{section}
\DeclareMathOperator{\spa}{span}
\DeclareMathOperator{\orb}{orb}
\DeclareMathOperator{\Per}{Per} 
\begin{document}
\title[On the chaoticity of derivatives]
{On the chaoticity of derivatives}
\author[Marat V. Markin]{Marat V. Markin}
\address{
Department of Mathematics\newline
California State University, Fresno\newline
5245 N. Backer Avenue, M/S PB 108\newline
Fresno, CA 93740-8001
}
\email{mmarkin@csufresno.edu}
\subjclass{Primary 47A16, 47B38; Secondary 47A10, 47B93}
\keywords{Hypercyclic vector, periodic point, hypercyclic operator, chaotic operator, spectrum}
\begin{abstract}
We utilize a recently established by the author \textit{sufficient condition for linear chaos} to prove the chaoticity of derivatives in the spaces $C[a,b]$ and $L_p(a,b)$ ($-\infty<a<b<\infty$, $1\le p<\infty$).
\end{abstract}
\maketitle
\epigraph{\textit{In all chaos there is a cosmos, in all disorder a secret order.}}{Carl Yung}

\section[Introduction]{Introduction}

The linear operator of differentiation 
\[
Df:=f'
\]
with maximal domain has been considered in various settings and shown to be \textit{chaotic}
\begin{itemize}
\item on the Fr\'echet space $H(\C)$ of entire functions with the topology of uniform convergence on compact subsets \cite{MacLane,Godefroy-Shapiro1991},
\item in the Hardy space $H^2$ \cite{B-Ch-S2001}, and
\item in the Bargmann space $F(\C)$ \cite{Emam-Hesh2005},
\end{itemize}
being continuous on $H(\C)$ while unbounded in $H^2$ and $F(\C)$.


In the complex space $L_2({\mathbb R})$, the unbounded differentiation operator $D$ with maximal domain has been shown to be \textit{non-hypercyclic} because of  being \textit{normal} \cite{Mark-Sich2019(1)} (cf. \cite{Markin2020(1)}).


It is to be noted that, in all the above cases, whenever $D$ is chaotic and, provided the underlying space is complex, each $\lambda \in \C$ is a simple eigenvalue for $D$.
 
We utilize a recently established by the author \textit{sufficient condition for linear chaos} {\cite[Theorem $3.2$]{arXiv:2106.14872}} to prove the \textit{chaoticity} of the differentiation operators
\[
D^nf:=f^{(n)}
\]
($n\in \N$) ($\N:=\left\{1,2,\dots\right\}$ is the set of \textit{natural numbers}) with maximal domains in the (real or complex) Banach spaces $C[a,b]$ and $L_p(a,b)$ ($-\infty<a<b<\infty$, $1\le p<\infty$), the former equipped with the maximum norm
\[
C[a,b]\ni f\mapsto \|f\|_\infty:=\max_{a\le x\le b}|f(x)|.
\]

It is noteworthy that the setting offered by the foregoing spaces does not allow reducing the problem of the chaoticity for the derivatives to that of weighted backward shifts (cf. \cite{Godefroy-Shapiro1991,Grosse-Erdmann2000,B-Ch-S2001,Emam-Hesh2005,B-B-T2008}). 


\section[Preliminaries]{Preliminaries}

The subsequent preliminaries are essential for our discourse.

\subsection{Hypercyclicity and Chaoticity}\

For a (bounded or unbounded) linear operator $A$ in a (real or complex) Banach space $X$, a nonzero vector 
\begin{equation*}
f\in C^\infty(A):=\bigcap_{n=0}^{\infty}D(A^n)
\end{equation*}
($D(\cdot)$ is the \textit{domain} of an operator, $A^0:=I$, $I$ is the \textit{identity operator} on $X$) is called \textit{hypercyclic} if its \textit{orbit} under $A$
\[
\orb(f,A):=\left\{A^nf\right\}_{n\in\Z_+}
\]
($\Z_+:=\left\{0,1,2,\dots\right\}$ is the set of \textit{nonnegative integers}) is dense in $X$.

Linear operators possessing hypercyclic vectors are said to be \textit{hypercyclic}.

If there exist an $N\in \N$ ($\N:=\left\{1,2,\dots\right\}$ is the set of \textit{natural numbers}) and a vector 
\[
f\in D(A^N)\quad \text{with}\quad A^Nf = f,
\]
such a vector is called a \textit{periodic point} for the operator $A$ of period $N$. If $f\ne 0$, we say that $N$ is a \textit{period} for $A$.

Hypercyclic linear operators with a dense in $X$ set $\Per(A)$ of periodic points are said to be \textit{chaotic}.

See \cite{Devaney,Godefroy-Shapiro1991,B-Ch-S2001}.

\begin{rems}\label{HCrems}\
\begin{itemize}
\item In the prior definition of hypercyclicity, the underlying space is necessarily
\textit{infinite-dimensional} and \textit{separable} (see, e.g., \cite{Grosse-Erdmann-Manguillot}).
\item For a hypercyclic linear operator $A$, the set $HC(A)$ of its hypercyclic vectors is necessarily dense in $X$, and hence, the more so, is the subspace $C^\infty(A)\supseteq HC(A)$.
\item Observe that
\[
\Per(A)=\bigcup_{N=1}^\infty \Per_N(A),
\]
where 
\[
\Per_N(A)=\ker(A^N-I),\ N\in \N
\]
is the \textit{subspace} of $N$-periodic points of $A$.
\item As immediately follows from the inclusions
\begin{equation*}
HC(A^n)\subseteq HC(A),\ \Per(A^n)\subseteq \Per(A), n\in \N,
\end{equation*}
if, for a linear operator $A$ in an infinite-dimensional separable Banach space $X$ and some $n\ge 2$, the operator $A^n$ is hypercyclic or chaotic, then $A$ is also hypercyclic or chaotic, respectively.
\end{itemize} 
\end{rems}

Prior to \cite{B-Ch-S2001,deL-E-G-E2003}, the notions of linear hypercyclicity and chaos had been studied exclusively for \textit{continuous} linear operators on Fr\'echet spaces, in particular for \textit{bounded} linear operators on Banach spaces (for a comprehensive survey, see \cite{Bayart-Matheron,Grosse-Erdmann-Manguillot}).

The following  statement, obtained in \cite{arXiv:2106.14872} by strengthening one of the hypotheses of a well-known sufficient condition for linear hypercyclicity \cite[Theorem $2.1$]{B-Ch-S2001} is a shortcut for establish chaoticity for (bounded or unbounded) linear operators without explicitly constructing both hypercyclic vectors and a dense set periodic points for them.

\begin{thm}[Sufficient Condition for Linear Chaos {\cite[Theorem $3.2$]{arXiv:2106.14872}}]\label{SCC}\ \\
Let $(X,\|\cdot\|)$ be a  (real or complex) infinite-dimensional separable Banach space and $A$ be a densely defined linear operator in $X$ such that each power $A^{n}$ ($n\in\N$) is a closed operator. If there exists a set
\[
Y\subseteq C^\infty(A):=\bigcap_{n=1}^\infty D(A^n)
\]
dense in $X$ and a mapping $B:Y\to Y$ such that
\begin{enumerate}
\item $\forall\, f\in Y:\ ABf=f$ and
\item $\forall\, f\in Y\  \exists\, \alpha=\alpha(f)\in (0,1),\ \exists\, c=c(f,\alpha)>0\ \forall\, n\in \N:$
\begin{equation*}
\max\left(\|A^nf\|,\|B^nf\|\right)\le c\alpha^n,
\end{equation*}
or equivalently,
\begin{equation}\label{(2(b))}
\forall\, f\in Y:\ \max\left(r(A,f),r(B,f)\right)<1,
\end{equation}
where 
\[
r(A,f):=\limsup_{n\to \infty}{\|A^nf\|}^{1/n}\quad \text{and}\quad
r(B,f):=\limsup_{n\to \infty}{\|B^nf\|}^{1/n},
\]
\end{enumerate}
then the operator $A$ is chaotic.
\end{thm}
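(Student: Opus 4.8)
The plan is to derive the two defining ingredients of the chaoticity of $A$ --- its hypercyclicity and the density of $\Per(A)$ in $X$ --- separately, each directly from hypotheses (1) and (2).

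For hypercyclicity, I would note that (2) is a \emph{strengthening} of the decay hypothesis in the sufficient condition for linear hypercyclicity \cite[Theorem $2.1$]{B-Ch-S2001}: for every $f\in Y$ the estimate $\max(\|A^nf\|,\|B^nf\|)\le c\alpha^n$ with $\alpha\in(0,1)$ yields, in particular, $\|A^nf\|\to 0$ and $\|B^nf\|\to 0$ as $n\to\infty$, while (1) is precisely the right-inverse identity $ABf=f$ on $Y$ required there. Since the standing assumptions --- $X$ infinite-dimensional and separable, $A$ densely defined with every power $A^n$ closed, and $Y\subseteq C^\infty(A)$ dense in $X$ --- are exactly the remaining hypotheses of that theorem, $A$ is hypercyclic. (Alternatively, one reproves hypercyclicity directly by the usual Baire category argument applied to the vectors $f+B^ng$, $f,g\in Y$: these lie arbitrarily close to $f$ for large $n$, while $A^n(f+B^ng)=A^nf+g$ lies arbitrarily close to $g$ for large $n$, closedness of the powers being used as in \cite{B-Ch-S2001}.)

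For the periodic points, I fix $f\in Y$ and $N\in\N$ and produce a periodic point near $f$ as a two-sided geometric series. Since $B$ maps $Y$ into $Y\subseteq C^\infty(A)$ and $ABh=h$ for all $h\in Y$, an easy induction gives $A^jB^kf=B^{k-j}f$ for all integers $0\le j\le k$; also $f\in C^\infty(A)$, so each iterate $A^{kN}f$ is defined. By (2), $\|A^{kN}f\|\le c\alpha^{kN}$ and $\|B^{kN}f\|\le c\alpha^{kN}$ for all $k$, so the series
\[
g_{N,f}:=\sum_{k=0}^{\infty}A^{kN}f+\sum_{k=1}^{\infty}B^{kN}f
\]
converges absolutely in $X$. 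I would then show $g_{N,f}\in\Per_N(A)$ using the closedness of $A^N$: the partial sums $s_m:=\sum_{k=0}^{m}A^{kN}f+\sum_{k=1}^{m}B^{kN}f$ all lie in $D(A^N)$, converge to $g_{N,f}$, and a reindexing based on $A^jB^kf=B^{k-j}f$ gives
\[
A^Ns_m=\sum_{k=1}^{m+1}A^{kN}f+\sum_{k=0}^{m-1}B^{kN}f\ \longrightarrow\ \sum_{k=1}^{\infty}A^{kN}f+\sum_{k=0}^{\infty}B^{kN}f=g_{N,f};
\]
closedness of $A^N$ then forces $g_{N,f}\in D(A^N)$ and $A^Ng_{N,f}=g_{N,f}$, i.e. $g_{N,f}\in\Per_N(A)\subseteq\Per(A)$.

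It remains to note that these periodic points are dense: by (2),
\[
\|g_{N,f}-f\|\le\sum_{k=1}^{\infty}\|A^{kN}f\|+\sum_{k=1}^{\infty}\|B^{kN}f\|\le\frac{2c\,\alpha^{N}}{1-\alpha^{N}}\to 0\quad\text{as}\quad N\to\infty,
\]
so every $f\in Y$ is a norm limit of points of $\Per(A)$; since $Y$ is dense in $X$, so is $\Per(A)$, and together with hypercyclicity this makes $A$ chaotic. The one genuinely delicate step is pushing $A^N$ through an infinite series, and this is exactly where the hypothesis that every power $A^n$ is closed is used: one must check that $s_m$ \emph{and} $A^Ns_m$ both converge before invoking the closed-graph property. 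The exponential decay in (2) makes both convergences automatic, so beyond careful bookkeeping of the summation indices I do not anticipate a real obstacle.
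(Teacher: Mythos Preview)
The paper does not actually prove this theorem; it is quoted in the Preliminaries as \cite[Theorem~3.2]{arXiv:2106.14872} and then used as a black box in the proofs of Theorems~\ref{CDCab} and~\ref{CDLp}. So there is no proof in the present paper to compare against.

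That said, your argument is correct and is exactly the approach one would expect from the way the paper frames the result (``obtained \dots\ by strengthening one of the hypotheses of \cite[Theorem~2.1]{B-Ch-S2001}''): hypercyclicity is inherited from the weaker decay condition of \cite[Theorem~2.1]{B-Ch-S2001}, and the exponential bound in~(2) is then used to make the two-sided series $\sum_{k\ge 0}A^{kN}f+\sum_{k\ge 1}B^{kN}f$ converge to an $N$-periodic point close to $f$. Your bookkeeping is clean: the identity $A^jB^kf=B^{k-j}f$ on $Y$ (which follows from $ABf=f$ and $B(Y)\subseteq Y$) gives the reindexing $A^Ns_m=\sum_{k=1}^{m+1}A^{kN}f+\sum_{k=0}^{m-1}B^{kN}f$, and the closedness of $A^N$ is invoked precisely where it is needed, with both $s_m$ and $A^Ns_m$ shown to converge before the closed-graph step. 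The tail estimate $\|g_{N,f}-f\|\le 2c\alpha^N/(1-\alpha^N)\to 0$ then delivers density of $\Per(A)$ via the density of $Y$. No gaps.
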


We also need the subsequent

\begin{cor}[Chaoticity of Powers {\cite[Corollary $4.3$]{arXiv:2106.14872}}]\label{CP}\ \\
For a chaotic linear operator $A$ in a  (real or complex) infinite-dimensional separable Banach space $(X,\|\cdot\|)$ subject to the \textit{Sufficient Condition for Linear Chaos} (Theorem \ref{SCC}), each power $A^n$ ($n\in \N$) is chaotic.
\end{cor}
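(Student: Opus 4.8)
The plan is to invoke the \emph{Sufficient Condition for Linear Chaos} (Theorem \ref{SCC}) for the power $A^n$, recycling verbatim the data it already provides for $A$. Fix $n\in\N$. Since $A$ is subject to Theorem \ref{SCC}, there exist a set $Y\subseteq C^\infty(A)$ dense in $X$ and a mapping $B:Y\to Y$ with $ABf=f$ and $\max\bigl(r(A,f),r(B,f)\bigr)<1$ for every $f\in Y$, and moreover each power $A^k$ ($k\in\N$) is a closed operator. I would show that the \emph{same} $Y$, paired with the $n$-fold iterate $B':=B^n\colon Y\to Y$, witnesses Theorem \ref{SCC} for $A^n$.

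First, the standing hypotheses. The ambient space $X$ is the same infinite-dimensional separable Banach space. As $Y\subseteq C^\infty(A)\subseteq D(A^n)$ is dense in $X$, the (linear) operator $A^n$ is densely defined, and each of its powers $(A^n)^m=A^{nm}$ ($m\in\N$) is closed by the hypothesis on $A$. Also $Y\subseteq C^\infty(A)\subseteq C^\infty(A^n)=\bigcap_{m=1}^\infty D(A^{nm})$, so $B'$ indeed maps $Y$ into $C^\infty(A^n)$.

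Next, condition (1) of Theorem \ref{SCC} for the pair $(A^n,B')$ reads $A^nB^nf=f$ for $f\in Y$. Since $B$ maps $Y$ into itself, the vectors $f,Bf,\dots,B^{n-1}f$ all lie in $Y$; applying $ABg=g$ with $g=B^{n-1}f$ gives $A^nB^nf=A^{n-1}B^{n-1}f$, and repeating, $A^nB^nf=A^{n-1}B^{n-1}f=\dots=ABf=f$. Condition (2) for $(A^n,B')$ is $\max\bigl(r(A^n,f),r(B^n,f)\bigr)<1$ on $Y$. Passing to the subsequence of indices $k=nm$ gives $\limsup_{m\to\infty}\|A^{nm}f\|^{1/(nm)}\le\limsup_{k\to\infty}\|A^kf\|^{1/k}=r(A,f)$, so, using the monotonicity and continuity of $t\mapsto t^n$ on $[0,\infty)$,
\[
r(A^n,f)=\limsup_{m\to\infty}\|A^{nm}f\|^{1/m}=\Bigl(\limsup_{m\to\infty}\|A^{nm}f\|^{1/(nm)}\Bigr)^n\le\bigl(r(A,f)\bigr)^n<1,
\]
and the identical computation yields $r(B^n,f)\le\bigl(r(B,f)\bigr)^n<1$. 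All hypotheses of Theorem \ref{SCC} now hold for $A^n$, so $A^n$ is chaotic.

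The argument has no genuine obstacle; the points that need a moment's attention are that $B^n$ must be read as a self-map of $Y$ for condition (1) to even make sense, and that it is the sharper bound $r(A^n,f)\le(r(A,f))^n$ — not merely $r(A^n,f)\le r(A,f)$ — that preserves the strict inequality with $1$, reflecting that the growth rate governing $A^n$ is the $n$th power of the one governing $A$. One might instead hope to obtain hypercyclicity of $A^n$ from Ansari's theorem and a dense set of periodic points for $A^n$ from $\Per(A)\subseteq\Per(A^n)$; but Ansari's theorem is formulated for continuous operators, while here $A$ is typically unbounded, so the route through Theorem \ref{SCC} is what makes the corollary go through uniformly.
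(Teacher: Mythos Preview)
The paper does not prove this corollary; it is quoted verbatim from \cite[Corollary~4.5]{arXiv:2106.14872} and used as a black box. So there is no in-paper proof to compare against, and your task reduces to checking that your argument is sound. It is: reusing the same dense set $Y$ and taking $B':=B^n$ is exactly the intended mechanism, and your verifications of the standing hypotheses (density of $D(A^n)$, closedness of $(A^n)^m=A^{nm}$, $Y\subseteq C^\infty(A)=C^\infty(A^n)$), of condition~(1) via the telescoping $A^nB^nf=A^{n-1}B^{n-1}f=\dots=f$, and of condition~(2) via $r(A^n,f)=\bigl(\limsup_m\|A^{nm}f\|^{1/(nm)}\bigr)^n\le r(A,f)^n<1$ are all correct.

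One small quibble with your closing commentary: the claim that the sharper bound $r(A^n,f)\le r(A,f)^n$ is \emph{needed} to preserve strict inequality with $1$ is not right. Had you only shown $r(A^n,f)\le r(A,f)$, that would already give $r(A^n,f)<1$ since $r(A,f)<1$. The point is rather that the bound $r(A^n,f)\le r(A,f)$ is not generally true (it fails when $r(A,f)>1$), whereas $r(A^n,f)\le r(A,f)^n$ always holds and happens to be what your subsequence computation actually produces. This does not affect the proof, only the gloss.
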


\subsection{Resolvent Set and Spectrum}\

For a linear operator $A$ in a complex Banach space $X$, the set
\[
\rho(A):=\left\{ \lambda\in \C \,\middle|\, \exists\, (A-\lambda I)^{-1}\in L(X) \right\}
\]
($L(X)$ is the space of bounded linear operators on $X$) and its complement $\sigma(A):=\C\setminus \rho(A)$ are called the operator's \textit{resolvent set} and \textit{spectrum}, respectively.

The spectrum of $A$ contains the set $\sigma_p(A)$ of all its eigenvalues, called its \textit{point spectrum} of $A$ (see, e.g., \cite{MarkinEOT,Dun-SchI}).

\begin{rem}\label{CPrem}
For an unbounded linear operator $A$ in a complex Banach space $X$ with a nonempty \textit{resolvent set} $\rho(A)\neq \emps$ (i.e., $\sigma(A)\neq \C$), all powers $A^{n}$ ($n\in\N$) are \textit{closed operators} \cite{Dun-SchI}.
\end{rem}

\subsection{$L_p$ Spaces}\

Henceforth, the notations $f$ and $f(\cdot)$ are used to designate an equivalence class in $L_p(a,b)$ ($1\le p<\infty$, $-\infty<a<b<\infty$) and its representative, respectively.

The inclusions
\begin{equation*}
C[a,b],C^\infty[a,b],C_0^\infty[a,b],P\subseteq L_p(a,b),
\end{equation*}
where
\begin{equation}\label{C0infty}
C_0^\infty[a,b]:=\left\{ f(\cdot)\in C^\infty[a,b]\,\middle|\,  f^{(n)}(a)=f^{(n)}(b)=0,\, n\in\Z_+\right\}
\end{equation}
and
\begin{equation}\label{P}
P:=\left\{ \sum_{k=0}^{n}c_kx^k\,\middle|\, n\in \Z_+,\ c_k\in \F, \, k=0,\dots,n,\ x\in [a,b]\right\}
\end{equation}
($\F:=\R$ or $\F:=\C$) is the subspace of polynomials, are understood in the sense of the natural embedding
\[
C[a,b]\in f\mapsto i(f):= f\in L_p(a,b),
\] 
where $i(f)$ is the equivalence class represented by the continuous function $f$.

\section{Chaoticity of Derivatives}

\subsection{Derivatives in $C[a,b]$}

\begin{prop}[Derivatives in  {$C[a,b]$}]\label{PDCab}\ \\
In the (real or complex) space $(C[a,b],\|\cdot\|_\infty)$ ($-\infty<a<b<\infty$),
the $n$th derivative
\[
D^nf:=f^{(n)}
\]
with maximal domain $D(D^n):=C^n[a,b]$ is a densely defined unbounded closed linear operator for every $n\in \N$.

Furthermore, each $\lambda \in \F$ is an eigenvalue for $D^n$ of geometric multiplicity $n$., i.e.,
\[
\dim\ker(D^n-\lambda I)=n.
\]
\end{prop}

\begin{proof}
Let $n\in \N$ be arbitrary.

Observe that $D^n$ is the $n$th power of $D$, and hence, is \textit{linear}.

The fact that $D^n$ is \textit{densely defined} follows from the inclusion
\begin{equation*}
P\subseteq C^\infty[a,b]= C^\infty(D),
\end{equation*}
where $P$ is the \textit{subspace of polynomials} (see \eqref{P}), which, by the \textit{Weierstrass approximation theorem}, is \textit{dense} in $(C[a,b],\|\cdot\|_\infty)$ (see, e.g., \cite{MarkinEFA}).

The \textit{unboundedness} of $D^n$ instantly follows from the fact that, for
\[
e_k(x):=\left(\frac{x-a}{b-a}\right)^k,\ k\in \N,x\in [a,b],
\]
we have:
\[
e_k\in D(D^n)\ \text{and}\ \|e_k\|_\infty=e_k(b)=1,\ k\in \N,
\]
and, for $k\ge n$,
\begin{align*}
\|D^{n}e_{k}\|_\infty&=\left\|\prod_{j=0}^{n-1}(k-j)\left(\frac{1}{b-a}\right)^{n}\left(\frac{x-a}{b-a}\right)^{k-n}\right\|_\infty\\
&=\frac{k!}{(k-n)!}\left(\frac{1}{b-a}\right)^{n}\left(\frac{x-a}{b-a}\right)^{k-n}\biggr|_{x=b}\\
&=\frac{k!}{(k-n)!}\left(\frac{1}{b-a}\right)^{n}
\ge k\left(\frac{1}{b-a}\right)^{n}\to \infty,\ k\to \infty.
\end{align*}

As is known from the theory of linear differential equations, for an arbitrary $\lambda \in \F$, the equation
\[
D^nf=\lambda f
\]
has $n$ lineally independent solutions $f_1,\dots,f_n\in C^\infty[a,b]=C^\infty(D)$. Hence, $\lambda$ is an eigenvalue for $A$, with the corresponding eigenspace
\[
\ker(D^n-\lambda I)=\spa\left(\left\{f_1,\dots,f_n\right\}\right)
\] 
being $n$-dimensional. 

Due to the above, the \textit{closedness} of the operator $D^n$ is not automatic (see Remark \ref{CPrem}), and hence, is to be shown.

Consistently with the \textit{Riesz representation theorem} (see, e.g., \cite{Dun-SchI,Goff-Ped,MarkinEFA}), there is a continuous embedding $E$ of $C[a,b]$ into the \textit{dual space} $C^*[a,b]$, which relates the corresponding vectors $g\in C[a,b]$ and $g^*:=Eg\in C^*[a,b]$ as follows:
\begin{equation}\label{IE}
\la f, g^*\ra=\int_a^b f(x)g(x)\,dx,\ f\in C[a,b],
\end{equation}
($\la\cdot,\cdot\ra$ is the pairing between $C[a,b]$ and $C^*[a,b]$) with
\[
\|g^*\|\le (b-a)\|g\|_\infty.
\]

In $C[a,b]$, the linear operator 
\begin{equation}\label{D_02}
D_0^nf:=f^{(n)}
\end{equation}
with domain
\begin{equation}\label{D(D_0)2}
D(D_0^n):=C_0^\infty[a,b]:=\left\{ f(\cdot)\in C^\infty[a,b]\,\middle|\,  f^{(n)}(a)=f^{(n)}(b)=0,\, n\in\Z_+\right\},
\end{equation}
is not densely defined, and hence, its adjoint (or conjugate) is not well defined. 

In $C^*[a,b]$, let us consider the linear operator ${(D_0^n)}'$ defined as follows:
\[
{(D_0^n)}'E:=ED_0^n, 
\]
i.e., via the commutative diagram
\begin{equation*}
\begin{tikzcd}[sep=large]
C^*[a,b]&\hspace{-1cm} \supseteq D({(D_0^n)}') \arrow[r, "{(D_0^n)}'"] & C^*[a,b]  \\
C[a,b] & \hspace{-1cm} \supseteq \quad D(D_0^n)\arrow[u, "E"] \arrow[u, "E"]\arrow[r, "D_0^n"]& C[a,b]\arrow[u, "E"]
\end{tikzcd},
\end{equation*}
for which
\[
D({(D_0^n)}'):=E(D(D_0^n)),
\]
and
\begin{equation}\label{**2}
\forall\, f\in C[a,b]\ \forall\, g\in D(D_0^n):\ \la f,{(D_0^n)}'g^*\ra=\int_a^b f(x)g^{(n)}(x)\,dx,
\end{equation}
where $g^*:=Eg\in D({(D_0^n)}')$.

The domain $D({(D_0^n)}')$ is a \textit{total} subspace of $C^*[a,b]$, i.e., a set separating 
points in $C[a,b]$, \cite[Definition II.$2.9$]{Goldberg}. Indeed, let $f\in C[a,b]$ and suppose that
\[
\forall\, g^*\in D({(D_0^n)}'):\ \la f,g^*\ra=\int_a^b f(x)g(x)\,dx=0,
\]
where $g:=E^{-1}g^*\in D(D_0^n)$, which implies that
\[
\forall\, g\in C_0^\infty[a,b]=D(D_0^n):\ \int_a^b f(x)g(x)\,dx=0,
\]
and hence, $f=0$ (see, e.g., \cite{Ziemer1989}).

Thus, for ${(D_0^n)}'$, well defined in $C[a,b]$ is the \textit{pre-adjoint} (or \textit{preconjugate}) operator
\begin{equation}\label{'*2}
\begin{split}
D({'{(D_0^n)}'}):=\{ f\in C[a,b]\,|\,&\exists\, h\in C[a,b]\ 
\forall\, g^*\in D({(D_0^n)}'):\\
& \la f,{(D_0^n)}'g^*\ra =\la h,g^*\ra \}\ni f\mapsto {'{(D_0^n)}'}f:=h.
\end{split}
\end{equation}
\cite[Definition VI.$1.1$]{Goldberg}.  

In view of \eqref{**2}, \eqref{'*2} acquires the form
\begin{equation}\label{''2}
\begin{split}
D({'{(D_0^n)}'})&:=\biggl\{ f\in C[a,b]\,\biggm|\,\exists\, h\in C[a,b]\ 
\forall\, g\in D(D_0^n):\\
& \int_a^b f(x)g^{(n)}(x)\,dx
=\int_a^b h(x)g(x)\,dx\biggr\}\ni f\mapsto {'{(D_0^n)}'}f:=h,
\end{split}
\end{equation}

From \eqref{''2}, we infer that $f\in D({'{(D_0^n)}'})\subseteq C[a,b]$ \textit{iff} its $n$th \textit{distributional derivative} $(-1)^nh$ belongs to $C[a,b]$, which is the case \textit{iff} 
\begin{equation*}
g(x)=(-1)^n\int_a^x\dots\int_a^{t_2}h(t_1)\,dt_1\dots\,dt_n
+\sum_{k=0}^{n-1}c_kx^k,\ x\in [a,b],
\end{equation*}
with some $c_k\in \F$, $k=0,\dots,n-1$, the latter being equivalent to the fact that 
\[
g\in C^n[a,b]\quad \text{and}\quad g^{(n)}(x)=(-1)^nh(x),\ x\in [a,b].
\]

Thus, we conclude that
\[
D({'{(D_0^n)}'})=C^n[a,b]=D(D^n)
\]
and
\[
{'{(D_0^n)}'}f=(-1)^nD^nf,\ f\in C^n[a,b],
\]
(see, e.g., \cite{Ziemer1989,Sobolev}), i.e.,
\[
D^n=(-1)^n{'{(D_0^n)}'}.
\]

By the closedness of a pre-adjoint operator \cite[Lemma VI.$1.2$]{Goldberg}, we infer that the operator $D^n$ is \textit{closed}, which completes the proof.
\end{proof}

\begin{thm}[Chaoticity of Derivatives in  {$C[a,b]$}]\label{CDCab}\ \\
In the (real or complex) space $(C[a,b],\|\cdot\|_\infty)$ ($-\infty<a<b<\infty$), 
the $n$th derivative
\[
D^nf:=f^{(n)}
\]
with maximal domain $D(D^n):=C^n[a,b]$ is a chaotic operator for every $n\in \N$.
\end{thm}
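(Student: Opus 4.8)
The plan is to invoke the \emph{Sufficient Condition for Linear Chaos} (Theorem~\ref{SCC}) with the most economical choices of auxiliary data. First I would fix $n\in\N$ and record that, by Proposition~\ref{PDCab}, the operator $D^n$ is densely defined, unbounded, and closed in $(C[a,b],\|\cdot\|_\infty)$, and that, moreover, each power $(D^n)^k=D^{nk}$ ($k\in\N$) is closed --- this being precisely Proposition~\ref{PDCab} applied with $nk$ in place of $n$. Thus all the standing hypotheses of Theorem~\ref{SCC} on the operator $A:=D^n$ are in force, and it remains to exhibit a set $Y$ dense in $C[a,b]$ and a mapping $B:Y\to Y$ satisfying conditions $(1)$ and $(2)$.

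For $Y$ I would take the subspace $P$ of polynomials from \eqref{P}: it lies in $C^\infty[a,b]=C^\infty(D^n)$ and, by the \emph{Weierstrass approximation theorem}, is dense in $(C[a,b],\|\cdot\|_\infty)$. For $B$ I would take $n$-fold iterated integration based at $a$, which via the Cauchy formula for repeated integration can be written as $(Bf)(x):=\frac{1}{(n-1)!}\int_a^x (x-t)^{n-1}f(t)\,dt$, $x\in[a,b]$. Since integrating a polynomial produces a polynomial (of degree raised by $n$), the map $B$ is a well-defined linear self-map of $Y$, and $D^nBf=f$ for every $f\in Y$, which is condition $(1)$.

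Condition $(2)$ then reduces to two elementary growth estimates. If $f\in Y$ has degree $m$, then $D^{nk}f=f^{(nk)}\equiv 0$ as soon as $nk>m$, whence $r(D^n,f)=\limsup_{k\to\infty}\|D^{nk}f\|_\infty^{1/k}=0$. On the other hand, $B^k$ is $nk$-fold iterated integration based at $a$, so the Cauchy formula yields $\|B^kf\|_\infty\le \frac{(b-a)^{nk}}{(nk)!}\|f\|_\infty$, and since $\bigl((nk)!\bigr)^{1/k}\to\infty$ we get $r(B,f)=0$ as well. Therefore $\max\bigl(r(D^n,f),r(B,f)\bigr)=0<1$ for every $f\in Y$, i.e., \eqref{(2(b))} holds (equivalently, condition $(2)$ holds in the form $\max(\|D^{nk}f\|_\infty,\|B^kf\|_\infty)\le c\,\alpha^k$ with, say, $\alpha=1/2$), and Theorem~\ref{SCC} delivers the chaoticity of $D^n$.

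I do not anticipate a genuine obstacle: the only substantive work --- establishing closedness of all powers of $D^n$ in spite of the spectrum being the whole plane --- has already been carried out in Proposition~\ref{PDCab}, and Theorem~\ref{SCC} was tailored precisely to bypass the explicit construction of hypercyclic vectors and of a dense set of periodic points. The points requiring a modicum of care are the verification that $B$ maps $P$ into itself and the two $\limsup$ computations, both routine. (One could alternatively take $Y$ to consist of finite linear combinations of polynomials and exponentials $e^{\lambda x}$ with $|\lambda|<1$, for which $r(D^n,f)<1$ and $r(B,f)=0$ by the same estimate; but the purely polynomial choice is cleaner and already available in the paper's notation.)
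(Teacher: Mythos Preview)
Your proof is correct and proceeds along the same lines as the paper's: both take $Y=P$ with its Weierstrass denseness, both use iterated integration as the right inverse, and both exploit that polynomials are annihilated by sufficiently high derivatives while iterated integrals decay factorially. The one organizational difference is that the paper applies Theorem~\ref{SCC} to $D$ itself (with the single Volterra operator $[Bf](x)=\int_a^x f(t)\,dt$ as right inverse, using its quasinilpotency $\lim_{k\to\infty}\|B^k\|^{1/k}=0$) and then invokes Corollary~\ref{CP} to pass from $D$ to $D^n$, whereas you apply Theorem~\ref{SCC} directly to $D^n$ with $n$-fold integration as the right inverse. Your route is marginally more self-contained in that it avoids the extra corollary, at the price of checking closedness of the powers $(D^n)^k=D^{nk}$ explicitly --- which, as you note, is immediate from Proposition~\ref{PDCab}. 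Either way the substance is identical.
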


\begin{proof} 
By the prior proposition, $D^n$ is a densely defined unbounded closed linear operator for all
$n\in \N$.

Let
\begin{equation*}
Y:=P=\bigcup_{n=1}^\infty \ker D^n\subseteq C^\infty[a,b]= C^\infty(D),
\end{equation*}
where $P$ is the dense in $(C[a,b],\|\cdot\|_\infty)$ subspace of \textit{polynomials} (see \eqref{P}) and
\begin{equation*}
\ker D^n=\left\{ f\in P\,\middle|\,\deg f\le n-1  \right\},\ n\in \N.
\end{equation*}

The mapping $B:Y\to Y$ is the restriction to $Y$ of the Volterra integration operator
\[
[Bf](x):=\int_a^x f(t)\,dt,\ f\in C[a,b],x\in [a,b],
\]
which is a  \textit{quasinilpotent} bounded linear operator on $C[a,b]$, i.e.,
\begin{equation}\label{D}
\lim_{n\to \infty}{\|B^n\|}^{1/n}=0
\end{equation}
(here and henceforth, $\|\cdot\|$ also stands for the \textit{operator norm}) (see, e.g., \cite{MarkinEOT}).

Also,
\begin{equation*}
ABf=f,\ f\in C[a,b].
\end{equation*}

Let $f\in Y$ be arbitrary. For all $n\ge \deg f+1$,
\[
D^{n}f=0.
\]

Further, by \eqref{D}, we infer that
\begin{equation*}
\begin{aligned}
\forall\,  f\in C[a,b]:\ &0\le \limsup_{n\to \infty}{\|B^nf\|}^{1/n}
\le \limsup_{n\to \infty}{\left(\|B^n\|\|f\|\right)}^{1/n} \\
&=\lim_{n\to \infty}{\|B^n\|}^{1/n} \lim_{n\to \infty}{\|f\|}^{1/n}=0<1
\end{aligned}
\end{equation*}
(cf. \eqref{(2(b))}).

Thus, by the \textit{Sufficient Condition for Linear Chaos} (Theorem \ref{SCC}) and the \textit{Chaoticity of Powers} (Corollary \ref{CP}), for each $n\in \N$, the power $D^n$ is \textit{chaotic}.
\end{proof}

\begin{rems}\
\begin{itemize}
\item Since the natural embedding
\[
H(\C)\ni f\mapsto i(f):=\restr{f}{[a,b]}\in C^\infty[a,b]\subseteq C[a,b]
\]
($\restr{\cdot}{\cdot}$ is the \textit{restriction} of a function (left) to a set (right))
is \textit{continuous} and, due to the denseness of the subspace $P$ of polynomials in $(C[a,b],\|\cdot\|_\infty)$, has \textit{a dense range} (see, e.g., \cite{MarkinEFA}) and
\[
\forall\, f\in D(D_H)=H(\C): i(f)\in C^1[a,b]=D(D_C)\ \text{and}\ 
D_Ci(f)=i(D_Hf)
\]
($D_H$ and $D_C$ stand for the differentiation operators in $H(\C)$ and $C[a,b]$, respectively), i.e., the diagram
\begin{equation*}
\begin{tikzcd}[sep=large]
C[a,b] \hspace{-1cm} &\supseteq D(D_C) \arrow[r, "D_C"] & C[a,b]  \\
              & H(\C) \arrow[r, "D_H"]\arrow[u, "i"]& H(\C)\arrow[u, "i"]
\end{tikzcd}
\end{equation*}
commutes, the image $i(f)$ of any hypercyclic vector $f$ for $D_H$ is a hypercyclic vector for $D_C$. Also, the image $i(f)$ of an $N$-periodic point $f$ for $D_H$ is an $N$-periodic point for $D_C$.

Thus, the case of the differentiation operator $D$ in the complex space $C[a,b]$ follows from the chaoticity of  MacLane's operator $D$ in $H(\C)$ (see Introduction).
\item For the complex space $C[a,b]$ ($-\infty<a<b<\infty$), the fact that all $\lambda \in \C$ are eigenvalues for $D^n$ ($n\in \N$) of geometric multiplicity $n$ is consistent with \cite[Theorem $4.1$]{arXiv:2106.14872}. 
\end{itemize}
\end{rems}

\subsection{Derivatives in $L_p(a,b)$}\

The following statement has a value of its own and is not to be used to prove the chaoticity of derivatives in $L_p(a,b)$ ($1\le p<\infty$, $-\infty<a<b<\infty$).

\begin{prop}[Derivatives in  $L_p(a,b)$]\label{PDLp}\ \\
In the (real or complex) space $L_p(a,b)$ ($1\le p<\infty$, $-\infty<a<b<\infty$),
the $n$th derivative
\[
D^nf:=f^{(n)}
\]
with maximal domain
\begin{equation*}
\begin{split}
D(D^n):=W_p^n(a,b):=\bigl\{ f\in  L_p(a,b)\,\bigm|\,& f(\cdot)\in C^{n-1}[a,b],\ 
f^{(n-1)}(\cdot)\in AC[a,b],\\
&f^{(n)}\in L_p(a,b)\bigr\}
\end{split}
\end{equation*}
is a densely defined unbounded closed linear operator for every $n\in \N$.

Furthermore, each $\lambda \in \F$ is an eigenvalue for $D^n$ of geometric multiplicity $n$.
\end{prop}

\begin{proof}
Let $n\in \N$ be arbitrary.

Observe that $D^n$ is the $n$th power of $D$, and hence, is \textit{linear}.

The fact that $D^n$ is \textit{densely defined} follows from the inclusion
\begin{equation*}
P\subseteq C^\infty[a,b]= C^\infty(D),
\end{equation*}
where $P$ is the \textit{subspace of polynomials} (see \eqref{P}), which, by the \textit{Weierstrass approximation theorem}, is \textit{dense} in $(C[a,b],\|\cdot\|_\infty)$ (see, e.g., \cite{MarkinEFA}), and hence, in view of the denseness of $C[a,b]$ in $L_p(a,b)$ (see, e.g., \cite{MarkinRA}), also in $L_p(a,b)$.

The \textit{unboundedness} of $D^n$ follows from the fact that, for
\[
e_k(x):=\left(\frac{kp+1}{b-a}\right)^{1/p}\left(\frac{x-a}{b-a}\right)^k,\ k\in \N,x\in [a,b],
\]
we have:
\[
e_k\in D(D^n)\ \text{and}\ \|e_k\|_p=1,\ k\in \N,
\]
and, for $k\ge n$,
\begin{align*}
\|D^{n}e_{k}\|_p&=\left\|\left(\frac{kp+1}{b-a}\right)^{1/p}\prod_{j=0}^{n-1}(k-j)\left(\frac{1}{b-a}\right)^{n}\left(\frac{x-a}{b-a}\right)^{k-n}\right\|_p
\\
&=\left(\frac{kp+1}{b-a}\right)^{1/p}\frac{k!}{(k-n)!}\left(\frac{1}{b-a}\right)^{n}\left\|\left(\frac{x-a}{b-a}\right)^{k-n}\right\|_p\\
&=\left(\frac{kp+1}{b-a}\right)^{1/p}\frac{k!}{(k-n)!}\left(\frac{1}{b-a}\right)^{n}\left(\frac{b-a}{(k-n)p+1}\right)^{1/p}
\\
&\ge \left(\frac{kp+1}{(k-n)p+1}\right)^{1/p}k\left(\frac{1}{b-a}\right)^{n}\to \infty,\ k\to \infty.
\end{align*}

As we noted in the proof of the prior theorem, for an arbitrary $\lambda \in \F$, the equation
\[
D^nf=\lambda f
\]
has $n$ lineally independent solutions $f_1,\dots,f_n\in C^\infty[a,b]=C^\infty(D)$. Hence, $\lambda$ is an eigenvalue for $A$, with the corresponding eigenspace
\[
\ker(D^n-\lambda I)=\spa\left(\left\{f_1,\dots,f_n\right\}\right)
\] 
being $n$-dimensional. 

Due to the above, the \textit{closedness} of the operator $D^n$ is not automatic (see Remark \ref{CPrem}), and hence, is to be shown.

Let $q:=p/(p-1)\in (1,\infty]$ be the \textit{conjugate} to $p$ index.

As is known (see, e.g., \cite{Dun-SchI}), there is an \textit{isometric isomorphism} $E_{q,p}$ between the \textit{dual space} $L_q^*(a,b)$ and $L_p(a,b)$, which relates the corresponding vectors $g^*\in L_q^*(a,b)$ and $g:=E_{q,p}g^*\in L_p(a,b)$ as follows:
\begin{equation}\label{II}
\la f,g^*\ra=\int_a^b f(x)g(x)\,dx,\ f\in L_q(a,b)
\end{equation}
($\la\cdot,\cdot\ra$ is the pairing between $L_q(a,b)$ and $L_q^*(a,b)$).

In $L_q(a,b)$, consider the linear operator 
\begin{equation}\label{D_0}
D_0^nf:=f^{(n)}
\end{equation}
with domain
\begin{equation}\label{D(D_0)}
D(D_0^n):=C_0^\infty[a,b].
\end{equation}

Suppose first that $1<p<\infty$. Then $1<q<\infty$ and operator $D_0$ is \textit{densely defined} due to the denseness of $C_0^\infty[a,b]$ (see \eqref{C0infty}) in $L_q(a,b)$ (see, e.g., \cite{Ziemer1989}). Therefore, for $D_0^n$, well defined in $L_q^*(a,b)$ is the \textit{adjoint} (or \textit{conjugate}) operator
\begin{equation*}
\begin{split}[sep=large]
D({(D_0^n)}^*):=\{ g^*\in L_q^*(a,b)\,|\,& \exists\, h^*\in L_q^*(a,b):\\
&g^*D_0^n=\restr{h^*}{D(D_0^n)}\}
\ni g^*\mapsto {(D_0^n)}^*g^*:=h^*
\end{split}
\end{equation*}
\cite{Dun-SchI,Goldberg}.

Due to the isometric isomorphism $E_{q,p}$ between the \textit{dual space} $L_q^*(a,b)$ and $L_p(a,b)$ (see \eqref{II}), ${(D_0^n)}^*$ can be identified with a linear operator
${(D_0^n)}'$ in $L_p(a,b)$ defined as follows:
\[
{(D_0^n)}'E_{q,p}:=E_{q,p}{(D_0^n)}^*,
\]
i.e., via the commutative diagram
\begin{equation*}
\begin{tikzcd}[sep=large]
L_q^*(a,b)&\hspace{-1cm} \supseteq D({(D_0^n)}^*) \arrow[r, "{(D_0^n)}^*"] \arrow[d, "E_{q,p}"]& L_q^*(a,b)  \arrow[d, "E_{q,p}"]\\
L_p(a,b)&\hspace{-1cm} \supseteq D({(D_0^n)}') \arrow[r, "{(D_0^n)}'"]& L_p(a,b)
\end{tikzcd},
\end{equation*}
for which
\begin{equation}\label{'}
\begin{split}
D({(D_0^n)}')&:=E_{q,p}({(D_0^n)}^*)=\biggl\{ g\in L_p(a,b)\,\biggm|\, \exists\, h\in L_p(a,b)\ \forall\,f\in D(D_0^n):\ 
\\
&\int_a^b f^{(n)}(x)g(x)\,dx=\int_a^b f(x)h(x)\,dx\biggr\}\ni f\mapsto {(D_0^n)}'g:=h.
\end{split}
\end{equation}

By the closedness of an adjoint operator (see, e.g., \cite{Goldberg}), we conclude that the operator ${(D_0^n)}'$ is \textit{closed} along with ${(D_0^n)}^*$.

From \eqref{'}, we infer that $g\in D({(D_0^n)}')\subseteq L_p(a,b)$ \textit{iff} its $n$th \textit{distributional derivative} $(-1)^nh$ belongs to $L_p(a,b)$, which is the case \textit{iff} the equivalence class $g$ admits the representative
\begin{equation}\label{nDD}
g(x)=(-1)^n\int_a^x\dots\int_a^{t_2}h(t_1)\,dt_1\dots\,dt_n
+\sum_{k=0}^{n-1}c_kx^k,\ x\in [a,b],
\end{equation}
with some $c_k\in \F$, $k=0,\dots,n-1$, the latter being equivalent to the fact that 
\[
g\in W_p^n(a,b)\quad \text{and}\quad g^{(n)}(x)=(-1)^nh(x)\ \text{on}\ [a,b]  \pmod{\mu}
\]
($\mu$ is the Lebesgue measure on $\R$).

Thus, we conclude that
\[
D({(D_0^n)}')=W_p^n(a,b)=D(D^n)
\]
and
\[
{(D_0^n)}'f=(-1)^nD^nf,\ f\in W_p^n(a,b),
\]
(see, e.g., \cite{Ziemer1989,Sobolev}), which implies that
\[
D^n=(-1)^n{(D_0^n)}'.
\]

Whence, by the closedness of the operator ${(D_0^n)}'$, we infer that the operator $D^n$ is \textit{closed}.

For $p=1$, $q=\infty$ and the linear operator $D_0^n$ in $L_\infty(a,b)$ (see \eqref{D_0}--\eqref{D(D_0)} with $q=\infty$) is not densely defined, and hence, has no adjoint. 

In $L_1^*(a,b)$, let us consider a linear operator ${(D_0^n)}'$ defined as follows:
\[
{(D_0^n)}'E_{1,\infty}^{-1}:=E_{1,\infty}^{-1}D_0^n,
\]
where $E_{1,\infty}$ the isometric isomorphism between the \textit{dual space} $L_1^*(a,b)$ and $L_\infty(a,b)$ (see \eqref{II} with $q=1$ and $p=\infty$), 
i.e., via the commutative diagram
\begin{equation*}
\begin{tikzcd}[sep=large]
L_1^*(a,b)&\hspace{-1.1cm} \supseteq D({(D_0^n)}') \arrow[r, "{(D_0^n)}'"] & L_1^*(a,b) \\
L_\infty(a,b)&\hspace{-1cm} \supseteq D(D_0^n) \arrow[r, "D_0^n"]\arrow[u, "E_{1,\infty}^{-1}"]& L_\infty(a,b)
\arrow[u, "E_{1,\infty}^{-1}"]
\end{tikzcd},
\end{equation*}
for which
\[
D({(D_0^n)}'):=E_{1,\infty}^{-1}(D(D_0^n)),
\]
and
\begin{equation}\label{**}
\forall\, f\in L_1(a,b)\ \forall\, g\in D(D_0^n):\ ({(D_0^n)}'g^*)f=\int_a^b f(x)g^{(n)}(x)\,dx,
\end{equation}
where $g^*:=E_{1,\infty}^{-1}g\in D({(D_0^n)}')$.

The domain $D({(D_0^n)}')$ is a \textit{total} subspace of $L_1^*(a,b)$, i.e., a set separating points in $L_1(a,b)$, \cite[Definition II.$2.9$]{Goldberg}. Indeed, let $f\in L_1(a,b)$ and suppose that
\[
\forall\, g^*\in D({(D_0^n)}'):\ \la f,g^*\ra =\int_a^b f(x)g(x)\,dx=0
\]
($\la\cdot,\cdot\ra$ is the pairing between $L_1(a,b)$ and $L_1^*(a,b)$), where $g:=E_{1,\infty}g^*\in D(D_0^n)$, which implies that
\[
\forall\, g\in C_0^\infty[a,b]=D(D_0^n):\ \int_a^b f(x)g(x)\,dx=0,
\]
and hence, $f=0$ (see, e.g., \cite{Ziemer1989}).

Thus, for ${(D_0^n)}'$, well defined in $L_1(a,b)$ is the \textit{pre-adjoint} (or \textit{preconjugate}) operator
\begin{equation}\label{'*}
\begin{split}
D({'{(D_0^n)}'}):=\{ f\in L_1(a,b)\,|\,&\exists\, h\in L_1(a,b)\ 
\forall\, g^*\in D({(D_0^n)}'):\\
& \left({(D_0^n)}'g^*\right)f=g^*h\}\ni f\mapsto {'{(D_0^n)}'}f:=h,
\end{split}
\end{equation}
\cite[Definition VI.$1.1$]{Goldberg}.

In view of \eqref{**}, \eqref{'*} acquires the form
\begin{equation}\label{''}
\begin{split}
D({'{(D_0^n)}'})&:=\biggl\{ f\in L_1(a,b)\,\biggm|\,\exists\, h\in L_1(a,b)\ 
\forall\, g\in D(D_0^n):\\
& \int_a^b f(x)g^{(n)}(x)\,dx
=\int_a^b h(x)g(x)\,dx\biggr\}\ni f\mapsto {'{(D_0^n)}'}f:=h,
\end{split}
\end{equation}

From \eqref{''}, we infer that $f\in D({'{(D_0^n)}'})\subseteq L_1(a,b)$ \textit{iff} its $n$th \textit{distributional derivative} $(-1)^nh$ belongs to $L_1(a,b)$. Whence, by reasoning as in regard to \eqref{nDD}, we can conclude that
\[
D({'{(D_0^n)}'})=W_1^n(a,b)=D(D^n)
\]
and
\[
{'{(D_0^n)}'}f=(-1)^nD^nf,\ f\in W_1^n(a,b),
\]
which implies that
\[
D^n=(-1)^n{'{(D_0^n)}'}.
\]

Whence, by the closedness of a pre-adjoint operator \cite[Lemma VI.$1.2$]{Goldberg}, we infer that the operator $D^n$ is \textit{closed}, which completes the proof.
\end{proof}

Since the Volterra integration operator 
\[
[Bf](x):=\int_a^x f(t)\,dt,\ f\in L_p(a,b),x\in [a,b],
\]
is also a  \textit{quasinilpotent} bounded linear operator on $L_p(a,b)$ ($1\le p<\infty$, $-\infty<a<b<\infty$) (see, e.g., \cite{Grosse-Erdmann-Manguillot}), the following statement can be proved by replicating the proof of Theorem \ref{CDCab} verbatim. 

\begin{thm}[Chaoticity of Derivatives in  $L_p(a,b)$]\label{CDLp}\ \\
In the (real or complex) space $L_p(a,b)$ ($1\le p<\infty$, $-\infty<a<b<\infty$), 
the $n$th derivative
\[
D^nf:=f^{(n)}
\]
with maximal domain $D(D^n):=W_p^n(a,b)$ is a chaotic operator for every $n\in \N$.
\end{thm}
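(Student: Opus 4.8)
The plan is to verify the hypotheses of the \textit{Sufficient Condition for Linear Chaos} (Theorem \ref{SCC}) for the operator $A:=D^n$ acting in $X:=L_p(a,b)$, exactly as was done for $C[a,b]$ in Theorem \ref{CDCab}. First I would note that $L_p(a,b)$ ($1\le p<\infty$, $-\infty<a<b<\infty$) is an infinite-dimensional separable Banach space, and that, by Proposition \ref{PDLp}, the operator $D^n$ with domain $W_p^n(a,b)$ is a densely defined unbounded closed linear operator; more precisely, one applies the theorem to $A:=D$ and invokes the \textit{Chaoticity of Powers} (Corollary \ref{CP}) to transfer the conclusion to every power $D^n$. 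The closedness of each power $D^k$ is furnished by Proposition \ref{PDLp} applied with $n:=k$.

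Next I would exhibit the requisite dense subspace and right inverse. Take $Y:=P$, the subspace of polynomials, which is dense in $L_p(a,b)$ (Weierstrass approximation combined with the denseness of $C[a,b]$ in $L_p(a,b)$) and satisfies $Y\subseteq C^\infty[a,b]=C^\infty(D)\subseteq C^\infty(D^n)$, indeed $Y=\bigcup_{k\ge 1}\ker D^k$. For the map $B:Y\to Y$ take the restriction to $Y$ of the Volterra integration operator $[Bf](x):=\int_a^x f(t)\,dt$, which maps polynomials to polynomials, hence $B(Y)\subseteq Y$, and satisfies $DBf=f$ for all $f\in C[a,b]$, so in particular $DBf=f$ on $Y$; this gives hypothesis (1) of Theorem \ref{SCC} for $A:=D$. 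For hypothesis (2), fix $f\in Y$: since $f$ is a polynomial, $D^kf=0$ for all $k\ge \deg f+1$, so $r(D,f)=0<1$; and since $B$ is a bounded quasinilpotent operator on $L_p(a,b)$, i.e. $\lim_{k\to\infty}\|B^k\|^{1/k}=0$, one gets $r(B,f)=\limsup_{k\to\infty}\|B^kf\|^{1/k}\le \lim_{k\to\infty}\|B^k\|^{1/k}\,\lim_{k\to\infty}\|f\|^{1/k}=0<1$. Thus $\max(r(D,f),r(B,f))<1$ for every $f\in Y$, which is \eqref{(2(b))}.

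With both hypotheses of Theorem \ref{SCC} verified for $D$, we conclude that $D$ is chaotic on $L_p(a,b)$, and then Corollary \ref{CP} yields that each power $D^n$ ($n\in\N$) is chaotic. Since this entire argument is word-for-word the proof of Theorem \ref{CDCab} with $C[a,b]$ replaced by $L_p(a,b)$, the only genuinely new ingredients needed are (i) the quasinilpotence of the Volterra operator on $L_p(a,b)$, which is standard (see, e.g., \cite{Grosse-Erdmann-Manguillot}), and (ii) the denseness of $P$ in $L_p(a,b)$; neither presents any real difficulty. The main point to be careful about is that the hypotheses of the \textit{Sufficient Condition for Linear Chaos} are applied to the operator $D$ itself (whose powers are all closed by Proposition \ref{PDLp}), with chaoticity of $D^n$ then following from the \textit{Chaoticity of Powers} corollary — there is no obstacle here beyond bookkeeping, which is precisely why the statement can be proved ``by replicating the proof of Theorem \ref{CDCab} verbatim.''
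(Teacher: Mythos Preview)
Your proposal is correct and follows exactly the paper's approach: the paper's proof consists solely of the remark that, since the Volterra operator is quasinilpotent on $L_p(a,b)$, the argument of Theorem~\ref{CDCab} can be replicated verbatim, which is precisely what you do. You have correctly identified the two ingredients that differ from the $C[a,b]$ case (denseness of $P$ in $L_p(a,b)$ and quasinilpotence of $B$ on $L_p(a,b)$) and that the Sufficient Condition is applied to $D$ itself, with the chaoticity of $D^n$ following from Corollary~\ref{CP}.
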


\begin{samepage}
\begin{rems}\
\begin{itemize}
\item Since, for $1\le p<\infty$, the natural embedding
\[
(C[a,b],\|\cdot\|_\infty)\ni f\mapsto i(f):=f\in L_p(a,b)
\]
is \textit{continuous} and has \textit{a dense range} (see, e.g., \cite{Dun-SchI,MarkinRA}) and, for $n\in \N$,
\[
\forall\, f\in D(D_C^n)=C^n[a,b]: i(f)\in W_p^n(a,b)=D(D_{L_p}^n)\ \text{and}\ 
D_{L_p}^ni(f)=i(D_C^nf)
\]
($D_C^n$ and $D_{L_p}^n$ stand for the $n$th derivative operators with maximal domain in $C[a,b]$ and $L_p(a,b)$, respectively), i.e., the diagram
\begin{equation*}
\begin{tikzcd}[sep=large]
L_p(a,b)&\hspace{-1cm} \supseteq D(D_{L_p}^n) \arrow[r, "D_{L_p}^n"] & L_p(a,b)  \\
C[a,b]&\hspace{-1cm} \supseteq D(D_C^n) \arrow[r, "D_C^n"]\arrow[u, "i"]& C[a,b]\arrow[u, "i"]
\end{tikzcd}
\end{equation*}
commutes, the image $i(f)$ of any hypercyclic vector $f$ for $D_C^n$ is a hypercyclic vector for $D_{L_p}^n$. Also, the image $i(f)$ of an $N$-periodic point $f$ for $D_C^n$ is an $N$-periodic point for $D_{L_p}^n$.

Thus, the prior theorem can also be considered as a corollary of Theorem \ref{CDCab}. 
\item For the complex space $L_p[a,b]$ ($1\le p<\infty$, $-\infty<a<b<\infty$), the fact that all $\lambda \in \C$ are eigenvalues for $D^n$ ($n\in \N$) of geometric multiplicity $n$ is consistent with \cite[Theorem $4.1$]{arXiv:2106.14872}. 
\end{itemize}
\end{rems}
\end{samepage}

\section{Concluding Remarks}

\begin{itemize}
\item Since $C^n[a,b]$ ($n\in \N$) is a Banach space relative to the norm
\[
\|f\|_n:=\sum_{k=0}^n \|f^{(k)}\|_\infty,\ f\in C^n[a,b],
\]
(see, e.g., \cite{MarkinEOT}), the closedness of the $n$th derivative operator $D^n$ with $D(D^n)=C^n[a,b]$ in $C[a,b]$ implies that  $C^n[a,b]$ is also a Banach space relative to the weaker graph norm
\[
\|f\|_{D^n}:=\|f\|_\infty+\|f^{(n)}\|_\infty\le \|f\|_n,\ f\in C^n[a,b],
\]
and hence, as follows from the \textit{Inverse mapping theorem}, the norms $\|\cdot\|_n$ and $\|\cdot\|_{D^n}$ on $C^n[a,b]$ are \textit{equivalent}, i.e.,
\[
\exists\, C>0\ \forall\,f\in C^n[a,b]:\  \|f\|_n\le C\left[\|f\|_\infty+\|f^{(n)}\|_\infty\right]
\]
(see, e.g., \cite{MarkinEOT}).
\item Since the Sobolev space $W_p^n(a,b)$ ($1\le p<\infty$, $n\in \N$, $-\infty<a<b<\infty$) is a Banach space relative to the norm
\[
\|f\|_{p,n}:=\sum_{k=0}^n \|f^{(k)}\|_p,\ f\in W_p^n(a,b),
\]
(see, e.g., \cite{Sobolev,Ziemer1989}), the closedness of the $n$th derivative operator $D^n$ with $D(D^n)=W_p^n(a,b)$ in $L_p(a,b)$ similarly implies that the norm $\|\cdot\|_{p,n}$ on $W_p^n(a,b)$ is \textit{equivalent} to the weaker graph norm
\[
\|f\|_{D^n}:=\|f\|_p+\|f^{(n)}\|_p\le \|f\|_{p,n},\ f\in W_p^n(a,b),
\]
i.e.,
\[
\exists\, C>0\ \forall\,f\in W_p^n(a,b):\  \|f\|_{p,n}\le C\left[\|f\|_p+\|f^{(n)}\|_p\right].
\]
\end{itemize}

 
\end{document}